\documentclass[12pt]{amsart}

\usepackage{amssymb}
\usepackage{float}

\newtheorem{thm}{Theorem}[section]
\newtheorem{lem}[thm]{Lemma}

\theoremstyle{remark}

\newtheorem{remark}[thm]{Remark}

\newcommand{\Aff}{\mathrm{Aff}}  

\begin{document}

\title{Cyclic Sieving for Strong Dichotomy Enumeration}

\author{Octavio A.  Agustín-Aquino}
\address{Instituto de Física y Matemáticas, Universidad Tecnológica de la Mixteca, Huajuapan de León, Oaxaca, México}
\email{octavioalberto@mixteco.utm.mx}
\urladdr{www.utm.mx/$\sim$octavioalberto} 
\subjclass[2020]{05A15, 05E18, 06A07, 20B25, 00A65}

\begin{abstract}
Agustín-Aquino \cite{oA18} solved, in terms of the table of marks of $\Aff(\mathbb{Z}/2k\mathbb{Z})$,
the problem of enumerating the classes of bicolour self-complementary and rigid patterns in $\mathbb{Z}/2k\mathbb{Z}$ (also
known as \emph{strong dichotomy classes}). In particular, the rigid pattern-inventory polynomial appeared,
for odd $k$, to yield the number of strong classes with negative sign when evaluated in $-1$, and it was conjectured
that this is true for $k$ a power of an odd prime. Here we prove the conjecture is true for $k$ odd in general.
\end{abstract}

 \maketitle

\section{Introduction}

The enumeration of bicolour patterns in music, in particular within $\mathbb{Z}/n\mathbb{Z}$, is important
for the study of scales, chords, rhythmic patterns \cite{hF91}, rhythmic canons \cite{hF04}, and counterpoint \cite{AJM15}.
In this last regard, we take $n=2k$, since the set of pitches in $2k$-tone equal temperament is divided into
sets of consonances and dissonances of the same cardinality. In
particular, patterns that are self-complementary and rigid (which are termed \emph{strong dichotomies}) are
of particular interest, for they can be seen as generalisations of the standard notion of consonance and dissonance from the Renaissance and onwards,
namely
\[
(K/D) = (\{0,3,4,7,8,9\},\{1,2,5,6,10,11\}).
\]

More specifically, and because of musical reasons, we consider patterns to be equivalent under the action of
the affine group $\Aff(\mathbb{Z}/2k\mathbb{Z})$. In \cite{oA18} the enumeration polynomial for the strong
dichotomy patterns was obtained in terms of the table of marks, and it was observed that the polynomial for the rigid
pattern classes $Q_{\mathrm{rig}}(x)$, evaluated at $-1$, appears to satisfy
\begin{equation}\label{E:Conjecture}
Q_{\mathrm{rig}}(-1)=-s(2k),
\end{equation}
where $s(2k)$ is the number of strong dichotomy classes. Thus, in terms of the cyclic sieving phenomenon \cite{RSW04},
it was conjectured that this always holds when $k$ is a power of an odd prime (because $(\mathbb{Z}/2k\mathbb{Z})^{\times}$ is cyclic in that case). We prove that it actually holds for every odd positive integer $k$. 

In Section 2 we gather some definitions, facts and notation in relation to the Möbius function over posets. In Section
3 we introduce the main algebraic combinatorial constructions, while in Section 4 we transform the left-hand side
$Q_{\mathrm{rig}}(-1)$. In Section 5 we examine the right-hand side $s(2k)$. Finally, in Section 6 we prove that
the two expressions agree up to the required sign. In Section 7 we provide some values
of the number of strong dichotomies using the algorithm of the main theorem implemented in GAP.

\section{Preliminaries on the Möbius Function for Posets}
Although the following facts are standard (see \cite[Section 5.2]{mA07}), we include them for
the sake of completeness and the use of certain notation. Given a poset $(P,\leq)$, its \emph{incidence algebra} is
\[
\mathbb{A}(P):=\{\alpha:P\times P \to \mathbb{C}: \alpha(x,y)=0\text{ unless }x\leq y\}.
\]

If we define a multiplication in $\mathbb{A}(P)$ by
\[
(\alpha\beta)(x,y) = \sum_{z\in P}\alpha(x,z)\beta(z,y)
\]
then the function
\[
I(x,y):=[x=y]
\]
acts as the identity (here $[S]$ is the \emph{Iverson bracket}, which yields $1$ if $S$ is true
and $0$ otherwise). We also have the \emph{zeta function}
\[
\zeta(x,y):=[x\leq y]
\]
and the \emph{Möbius function}
\[
\mu(x,y):=\begin{cases}
0, &x\not\leq y,\\
1, &x=y,\\
-\sum_{x\leq z <y}\mu(x,z), &\text{otherwise}.
\end{cases}
\]

The following holds
\begin{align}
\mu\zeta(x,y) &= \sum_{z\in P}\mu(x,z)\zeta(z,y)\notag\\
&= \sum_{z\in P}\mu(x,z)[z\leq y]\notag\\
&= \sum_{x\leq z\leq y}\mu(x,z),\label{E:Mobius}\\
&=\begin{cases}
0, & x\not\leq y,\\
\mu(x,x), &x=y,\\
\sum_{x\leq w < y}\mu(x,w)+\mu(x,y)=-\mu(x,y)+\mu(x,y)=0, &x<y,\end{cases}\notag\\
&=[x=y] = I(x,y).\notag
\end{align}

Also
\begin{align*}
\zeta\mu(x,y) &= \sum_{z\in P}\zeta(x,z)\mu(z,y)\\
&= \sum_{z\in P}[x\leq z]\mu(z,y) = \sum_{x\leq z\leq y}\mu(z,y)=I(x,y)
\end{align*}
(see \cite[Proposition 3.6.2]{rS97}). Note that in general the product
\[
\alpha\zeta(x,y) = \sum_{x\leq z\leq y}\alpha(x,z)
\]
is a cumulative sum of $\alpha$, and that Möbius inversion amounts to the fact that $\alpha\zeta = \beta$ is
equivalent to $\alpha = \beta\mu$. In particular, if $\alpha\zeta = \beta\zeta$, and given that
$\zeta$ is invertible, then we have $\alpha=\beta$.

\section{The Rigid Pattern-Inventory Polynomial}

The pattern-inventory polynomial for bicolour patterns within $S=\mathbb{Z}/2k\mathbb{Z}$ under the action of
\[
 G=\Aff(\mathbb{Z}/2k\mathbb{Z}):=(\mathbb{Z}/2k\mathbb{Z}) \rtimes (\mathbb{Z}/2k\mathbb{Z})^{\times}
\]
that are fixed by the trivial subgroup only is
\[
Q(z_{1},\ldots,z_{2k}) = \sum_{j=1}^{N}b_{N,j}P_{j}(z_{1},\ldots,z_{2k}),
\]
where $G_{i}$ are the subgroups of $\Aff(\mathbb{Z}/2k\mathbb{Z})$ and are indexed such that
\[
|G_{1}|\geq \cdots \geq |G_{N}|=1,
\]
the coefficients $b_{i,j}$ come from the inverse of the tables of marks matrix and
\[
P_{j}(z_{1},\ldots,z_{2k})=\prod_{d=1}^{2k}z_{d}^{\sum_{i=1}^{\ell}[|O_{G_{i}}|=d]}
\]
are the orbit index monomials, where $\ell$ is the number of orbits of the subgroup $G_{i}$ (see \cite{oA18} for
details).

Upon the substitution $z_{r}\mapsto 1+x^{r}$ (see \cite[Section 6.2]{mA07}), we obtain the inventory polynomial of the set of rigid subsets
\[
\mathcal{R}=\{A\subseteq \mathbb{Z}/2k\mathbb{Z}:G_{A}=1\}
\]
which is
\[
Q_{\mathrm{rig}}(x) = \sum_{A\in \mathcal{R}/G} x^{|A|}.
\]

Since each $A$ has an orbit of $|G|$ distinct sets given its rigidity, then
\[
Q_{\mathrm{rig}}(x) = \frac{1}{|G|}\sum_{A\subseteq \mathbb{Z}/2k\mathbb{Z}}[G_{A}=1]x^{|A|}.
\]

Now, using \eqref{E:Mobius} applied to the subgroup poset of $G$, we have
\begin{equation}\label{E:IdMobius}
 \sum_{1\leq H \leq G_{A}} \mu(1,H) = [1=G_{A}],
\end{equation}
thus
\begin{align*}
Q_{\mathrm{rig}}(x) &= \frac{1}{|G|}\sum_{A\subseteq \mathbb{Z}/2k\mathbb{Z}}\sum_{1\leq H \leq G_{A}} \mu(1,H)x^{|A|}\\
&=\frac{1}{|G|}\sum_{1\leq H \leq G}\sum_{A:HA=A} \mu(1,H)x^{|A|}\\
&=\frac{1}{|G|}\sum_{1\leq H \leq G}\mu(1,H)\sum_{A:HA=A}x^{|A|}
\end{align*}
because summing over the subgroups of the stabilizer of a subset is the same as summing over all the subgroups that
leave a set invariant.

\section{The Rigid Side of the Equation}

In preparation for the proof we evaluate the polynomial $Q_{\mathrm{rig}}$ at $-1$, so we can see how the Möbius expansion sieves the $H$-orbits by parity,
\[
Q_{\mathrm{rig}}(-1) = \frac{1}{|G|}\sum_{1\leq H \leq G}\mu(1,H)\sum_{A:HA=A}(-1)^{|A|}.
\]

Every subset that is invariant under a subgroup $H$ has to be the union of the orbits induced by such a subgroup,
thus
\[
 \sum_{A:HA=A}(-1)^{|A|} = \prod_{O\in S/H}(1+(-1)^{|O|}),
\]
and this is not zero only when all the orbits are of even cardinality. If we denote with $\mathcal{E}$ the set of
subgroups such that all of their orbits are of even cardinality, we can write
\begin{equation}\label{E:WhiteSide}
Q_{\mathrm{rig}}(-1) = \frac{1}{|G|}\sum_{1\leq H \leq G,H\in\mathcal{E}}\mu(1,H)2^{|S/H|}
\end{equation}

\section{The Other Side of the Equation}

On the other hand, define (as in \cite{oA12})
\[
M_{q} = \left\{D\in\binom{S}{k}:qD=\complement D\right\},
\]
which is the set of dichotomies of $S$ with quasipolarity $q$. Let $\mathcal{Q}(G)$ denote the set of quasipolarities, i.e.,
all $q\in G$ such that they are derangements and $q^{2}=1$.

\begin{remark}
The quasipolarity of a rigid dichotomy $D$ is unique, for if
$qD=\complement D$ and $rD=\complement D$, then
$r^{-1}qD=D$, so $r^{-1}q\in G_D=1$, and hence $q=r$.
\end{remark}

The number of classes of strong dichotomies is
\[
 s(2k) := \frac{1}{|G|}\sum_{q\in \mathcal{Q}(G)}\sum_{D\in M_{q}}[G_{D}=1]
\]
and, using \eqref{E:IdMobius} again,
\begin{align*}
s(2k) &= \frac{1}{|G|}\sum_{q\in \mathcal{Q}(G)}\sum_{D\in M_{q}}\sum_{1\leq H\leq G_{D}} \mu(1,H)\\
&= \frac{1}{|G|}\sum_{q\in \mathcal{Q}(G)}\sum_{1\leq H\leq G} \sum_{D\in M_{q},HD=D} \mu(1,H)\\
&= \frac{1}{|G|}\sum_{q\in \mathcal{Q}(G)}\sum_{1\leq H\leq G}\mu(1,H)|\{D\in M_{q}:H\leq G_{D}\}|\\
&= \frac{1}{|G|}\sum_{q\in \mathcal{Q}(G)}\sum_{1\leq H\leq G}\mu(1,H)|M_{q}^{H}|,
\end{align*}
where
\[
M_{q}^{H}:=\{D\in M_{q}:H\leq G_{D}\}.
\]

Define
\[
K_{0}:=\{e^{u}.v\in G:u\equiv 0\pmod{2}\}
\]
i.e., the set of affine symmetries with even translational part.

\begin{lem}\label{L:CaracKernel}
Let $k$ be an odd positive integer. A subgroup $H\leq G$ has only even orbits if, and only if, $H\not\leq K_{0}$.
In other words, $H\in\mathcal{E}$ if, and only if, $H\not\leq K_{0}$.
\end{lem}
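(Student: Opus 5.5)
The plan is to reduce everything to the parity map $\pi:\mathbb{Z}/2k\mathbb{Z}\to\mathbb{Z}/2\mathbb{Z}$, $x\mapsto x \bmod 2$. This map is well defined because $2\mid 2k$. Write an element of $G$ as $e^{u}.v:x\mapsto vx+u$ with $v\in(\mathbb{Z}/2k\mathbb{Z})^{\times}$. Since $2k$ is even, every unit $v$ is odd, so $\pi(vx+u)=\pi(x)+\pi(u)$. Thus every element of $G$ either preserves the two parity classes $E=\pi^{-1}(0)$ and $O=\pi^{-1}(1)$ (when $u$ is even) or swaps them (when $u$ is odd).

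The first step is to record that $\varphi:G\to\mathbb{Z}/2\mathbb{Z}$, $e^{u}.v\mapsto u \bmod 2$, is a group homomorphism. Composing $x\mapsto vx+u$ with $x\mapsto v'x+u'$ gives translation part $v'u+u'\equiv u+u' \pmod 2$, because $v'$ is odd. Its kernel is exactly $K_{0}$, so $K_{0}$ is a subgroup of index $2$. Alternatively, $K_{0}$ is the setwise stabilizer of $E$, and only this description is really needed.

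For the direction ``$H\in\mathcal{E}\Rightarrow H\not\leq K_{0}$'' I argue by contraposition. Suppose $H\leq K_{0}$. Then every element of $H$ maps $E$ to itself, so $E$ is a union of $H$-orbits. Since $|E|=k$ is odd, at least one of these orbits has odd cardinality, hence $H\notin\mathcal{E}$. This is the only place where the oddness of $k$ is used.

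For the converse, suppose $H\not\leq K_{0}$. Then $\varphi|_{H}$ is surjective, and $H_{0}:=H\cap K_{0}$ has index $2$ in $H$. Take any $x\in S$ and any $h\in H_x$. Since $h$ fixes $x$, it preserves the parity of $x$, so $\varphi(h)=0$; in other words, an element with odd translation moves every point to the other parity class and fixes nothing. Hence $H_x\leq H_{0}$. By the orbit–stabilizer theorem,
\[
|Hx|=[H:H_x]=[H:H_{0}]\,[H_{0}:H_x]=2\,[H_{0}:H_x],
\]
so every orbit is even and $H\in\mathcal{E}$. (Equivalently, an $h\in H\setminus K_{0}$ restricts to a bijection between $Hx\cap E$ and $Hx\cap O$.)

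I do not expect a serious obstacle. The only point needing care is the verification that parity behaves additively under affine maps, i.e., that units of $\mathbb{Z}/2k\mathbb{Z}$ are odd. Everything else is the orbit–stabilizer theorem together with the odd cardinality of the parity classes.
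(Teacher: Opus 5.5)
Your proof is correct and follows essentially the same route as the paper. In one direction, the odd-sized parity class $E$ is a union of $H$-orbits when $H\le K_0$; in the other, $H\cap K_0$ has index $2$ in $H$ and the elements outside it swap parity. Your orbit--stabilizer computation $|Hx|=2[H_0:H_x]$ repackages the paper's decomposition $Hx=Kx\sqcup\tau Kx$, and has the small merit of making explicit the index-$2$ and disjointness facts that the paper leaves implicit.
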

\begin{proof}
Suppose $H\leq G$ has only even orbits. Then some orbit has both even and odd elements of $S$, for otherwise the
even part of $S$ would be a union of disjoint sets of even cardinality, and thus $k$ would be even, which is a contradiction.
Hence at least one $e^{u}.v\in H$ has to swap parities. Given that $v$ is odd (for $\gcd(v,2k)=1$), then $u$ has to be odd, which means $H\not\leq K_{0}$.

Now suppose $H\leq G$ has at least one odd orbit. Let $K=H\cap K_{0}$. Suppose there exists $\tau \in H\setminus K_{0}$. Then any orbit $Hx$ can be decomposed as $Kx\sqcup \tau Kx$ and, since $|Kx|=|\tau Kx|$, this means all the orbits are of even cardinality. The contradiction implies $H\subseteq K_{0}$.
\end{proof}

\begin{remark}\label{R:Empty}
Note that $M_{q}^{H}=\emptyset$ for $H\not\leq K_{0}$. Indeed: since every $H$-orbit is of even cardinality, then the dichotomy has to be obtained as a union of orbits. But $|D|=k$ is odd, thus no dichotomy can be invariant under $H$.
\end{remark}

\begin{remark}
The quasipolarities $p=e^{u}.v$ are not members of $K_{0}$ when $k$ is odd. Indeed, all the orbits of $p$ are of length $2$,
thus the even (or odd) part of $\mathbb{Z}/2k\mathbb{Z}$ cannot be a union of them. This implies that $x$ and $p(x)$ are
of opposite parity for at least one $x$, which implies that $u$ is odd.
\end{remark}

\begin{lem}\label{L:Orbits} For $k$ odd, $H\leq K_{0}$ and $q\in\mathcal{Q}(G)$ we have
\[
 |M_{q}^{H}|=2^{|S/L|}
\]
where $L=\langle H,q\rangle$.
\end{lem}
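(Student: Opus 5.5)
The plan is to reduce the two conditions defining $M_{q}^{H}$ (namely $hD=D$ for $h\in H$, and $qD=\complement D$) to a single choice made independently on each $L$-orbit, using parity of translational parts.

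First I will introduce the parity map $\varphi:G\to\mathbb{Z}/2\mathbb{Z}$, $e^{u}.v\mapsto u \bmod 2$. Since $v$ is a unit and $2k$ is even, $v$ is odd, and
\[
e^{u}.v\circ e^{u'}.v' = e^{u+vu'}.vv'
\]
gives $u+vu'\equiv u+u' \pmod 2$. Hence $\varphi$ is a homomorphism with kernel $K_{0}$.

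Let $L_{0}:=L\cap K_{0}$. By the remark preceding this lemma, $q\notin K_{0}$, so $\varphi|_{L}$ is onto and $L_{0}$ has index $2$ in $L$, with $L=L_{0}\sqcup qL_{0}$. Also $H\leq L_{0}$, because $H\leq K_{0}$ by hypothesis.

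Next I record the geometric meaning of $\varphi$. Every $v$ is odd, so an element with $\varphi=0$ preserves the parity of every point of $S$, and an element with $\varphi=1$ reverses it. Therefore, for each $x\in S$:
\begin{itemize}
\item all points of $L_{0}x$ have the parity of $x$;
\item all points of $qL_{0}x$ have the opposite parity.
\end{itemize}
So each $L$-orbit splits as $Lx=L_{0}x\sqcup qL_{0}x$, a disjoint union of two $L_{0}$-orbits. These have equal size, and $q$ exchanges them, since $q\,qL_{0}x=L_{0}x$ because $q^{2}=1$.

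Now the characterisation. Suppose $D\in M_{q}^{H}$. Each element of $L$ maps $D$ to $D$ or to $\complement D$. Define $\psi:L\to\mathbb{Z}/2\mathbb{Z}$ by $\psi(g)=0$ if $gD=D$ and $\psi(g)=1$ otherwise. This is a homomorphism because $g\complement D=\complement gD$.

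The two homomorphisms $\psi$ and $\varphi|_{L}$ agree on the generators of $L$: both vanish on $H$, and both equal $1$ on $q$. Hence $\psi=\varphi|_{L}$. Consequently $L_{0}$ fixes $D$, so $D$ is a union of $L_{0}$-orbits. Moreover, since $qD=\complement D$, the set $D$ contains exactly one of the two halves $L_{0}x$, $qL_{0}x$ of each $L$-orbit.

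Conversely, choose one half in each $L$-orbit and let $D$ be the union of the chosen halves. Then:
\begin{itemize}
\item $D$ is $L_{0}$-invariant, hence $H$-invariant;
\item $q$ swaps the two halves of every $L$-orbit, so $qD=\complement D$;
\item $|D|=|S|/2=k$ automatically, because each orbit contributes exactly half of its points.
\end{itemize}
This gives a bijection between $M_{q}^{H}$ and the set of choices of one half per $L$-orbit, so $|M_{q}^{H}|=2^{|S/L|}$.

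The main point needing care is the disjointness of $L_{0}x$ and $qL_{0}x$, since without it an $L$-orbit need not split into two halves at all. This is exactly where the parity argument, and hence $q\notin K_{0}$ for odd $k$, is used.
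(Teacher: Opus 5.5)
Your proof is correct, and its skeleton matches the paper's. You split each $L$-orbit into the halves $L_0x$ and $qL_0x$ (your $L_0=L\cap K_0$ is the paper's $K$), which $q$ exchanges. You show that every $D\in M_q^H$ is $L_0$-invariant, and then count choices of one half per orbit.

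The difference is in the key step $L_0\le G_D$. The paper conjugates: $qhq^{-1}D=D$ for $h\in H$, so $qHq^{-1}\le G_D$. It then asserts $L_0=\langle H,qHq^{-1}\rangle$ on the grounds that an element of $L_0$ is a word in $H$ and $q$ with an even number of $q$'s. That claim tacitly uses that the parity of the translational part is a homomorphism. You make this explicit with $\varphi$. You then replace the word argument by the character $\psi:L\to\mathbb{Z}/2\mathbb{Z}$ recording whether $g$ fixes $D$ or sends it to $\complement D$. Since $\psi$ agrees with $\varphi$ on the generators, $\ker\psi=L_0$, which gives the slightly sharper statement $L\cap G_D=L_0$.

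Your version also spells out several things the paper leaves to ``the conclusion follows'':
\begin{itemize}
\item the halves are disjoint, by parity;
\item the constructed $D$ satisfies $qD=\complement D$ and $|D|=k$;
\item the correspondence between choices and dichotomies is a bijection.
\end{itemize}
The paper's conjugation argument is more structural: it exhibits directly a subgroup of $G_D$ built from $H$ and its $q$-conjugate, reflecting that $q$ normalises $G_D$ because $qG_Dq^{-1}=G_{\complement D}=G_D$.

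Two small steps you state without justification are immediate. First, $L$ preserves $\{D,\complement D\}$ (needed for $\psi$ to be defined) because each generator does. Second, $qL_0x=L_0(qx)$ is an $L_0$-orbit because $L_0$, having index $2$, is normal in $L$.
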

\begin{proof}
From the proof of Lemma \ref{L:CaracKernel} we can readily see that all orbits of $L$ can be decomposed as $Kx\sqcup qKx$ with $K=L\cap K_{0}$, and in particular we can pick either $Kx$ or $qKx$ to construct a dichotomy. We claim all $H$-invariant dichotomies with quasipolarity $q$ arise in this way, as we now show. If we have an $H$-invariant dichotomy $D\in M_{q}^{H}$, then $qD=\complement D$ and for $h\in H$
\[
(qhq^{-1})D = qh(q^{-1}D)=qh(\complement D) = q\complement D = D,
\]
because $\complement D$ is also invariant under $h$. This means $qHq^{-1}\leq G_{D}$, and since
$K=\langle H, qHq^{-1}\rangle$ (given that any element of $K$ can be written as a product of alternating elements of $H$ and $q$ with an even number of $q$'s, and that $qHq^{-1}$ is a subgroup of both $K_{0}$ and $L$),
this implies $K\leq G_{D}$. On the other hand, a $K$-invariant
dichotomy is a fortiori $H$-invariant, and the conclusion follows.
\end{proof}

Using the Lemma \ref{L:Orbits} and Remark \ref{R:Empty} we have
\begin{equation}\label{E:BlackSide}
s(2k)=\frac{1}{|G|}\sum_{q\in\mathcal{Q}(G)}\sum_{H\leq K_{0}}\mu(1,H)2^{|S/\langle H,q\rangle|}.
\end{equation}

Note that, by Lemma \ref{L:CaracKernel}, \eqref{E:WhiteSide} can be rewritten as
\begin{equation}\label{E:WhiteSideRedux}
Q_{\mathrm{rig}}(-1) = \frac{1}{|G|}\sum_{H\not\leq K_{0}}\mu(1,H)2^{|S/H|}.
\end{equation}

We can prove the following now that the similarities between \eqref{E:BlackSide} and \eqref{E:WhiteSideRedux} are unmistakable.

\section{The Main Theorem and Its Proof}

\begin{thm}\label{Th:Main} Let $k$ be an odd positive integer, $Q_{\mathrm{rig}}(x)$ the inventory polynomial of rigid bicolour pattern classes over $S=\mathbb{Z}/2k\mathbb{Z}$ and $s(2k)$ the number of strong dichotomy classes over $S$ (i.e., the rigid self-complementary bicolour patterns). The following holds:
\[
Q_{\mathrm{rig}}(-1) = -s(2k).
\]
\end{thm}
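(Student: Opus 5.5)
The plan is to compare \eqref{E:BlackSide} and \eqref{E:WhiteSideRedux} coefficient by coefficient, grouping the right-hand side of \eqref{E:BlackSide} according to the subgroup $L=\langle H,q\rangle$. Since $q\notin K_{0}$ by the remark on quasipolarities, every such $L$ satisfies $L\not\leq K_{0}$. Hence
\[
s(2k)=\frac{1}{|G|}\sum_{L\not\leq K_{0}} c(L)\,2^{|S/L|},\qquad
c(L):=\sum_{q\in\mathcal{Q}(G)\cap L}\;\sum_{\substack{H\leq L\cap K_{0}\\ \langle H,q\rangle=L}}\mu(1,H).
\]
The theorem then follows from the identity $c(L)=-\mu(1,L)$ for every $L\not\leq K_{0}$. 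Both expressions range over the same family $\mathcal{E}$ of subgroups (Lemma~\ref{L:CaracKernel}) and carry the same weight $2^{|S/L|}$.

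To prove $c=-\mu(1,\cdot)$ on $\mathcal{E}$, I would use the zeta-function trick from Section 2. I compare the cumulative sums $\sum_{L'\leq L,\,L'\in\mathcal{E}}$ of both functions for each $L\in\mathcal{E}$. Since $\mathcal{E}$ is an up-set in the subgroup lattice, these truncated cumulative sums determine the function uniquely, by induction on $|L|$ (the system is unitriangular). This is the restricted form of ``$\alpha\zeta=\beta\zeta\Rightarrow\alpha=\beta$''.

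\emph{Cumulative sum of $c$.} Summing $c(L')$ over $L'\leq L$ simply removes the constraint $\langle H,q\rangle=L'$. This gives $\sum_{q\in\mathcal{Q}(G)\cap L}\sum_{H\leq L\cap K_{0}}\mu(1,H)$, and by \eqref{E:IdMobius} this equals $|\mathcal{Q}(G)\cap L|\cdot[L\cap K_{0}=1]$. If $L\cap K_{0}=1$ and $L\not\leq K_{0}$, then $L\cap K_{0}$ has index $2$ in $L$, so $L=\{1,\tau\}$ with $\tau\notin K_{0}$. By Lemma~\ref{L:CaracKernel}, $\tau$ has only even orbits; being an involution, $\tau$ is then a derangement with $\tau^{2}=1$, i.e.\ a quasipolarity. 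So the cumulative sum is $[\,|L|=2\,]$.

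\emph{Cumulative sum of $-\mu(1,\cdot)$.} By \eqref{E:Mobius}, for $L\neq1$ we have $\sum_{L'\leq L}\mu(1,L')=0$, so
\[
-\sum_{L'\leq L,\,L'\not\leq K_{0}}\mu(1,L')=\sum_{L'\leq L\cap K_{0}}\mu(1,L')=[L\cap K_{0}=1],
\]
which again equals $[\,|L|=2\,]$ for $L\in\mathcal{E}$.

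The two cumulative sums agree, so $c(L)=-\mu(1,L)$ on $\mathcal{E}$, and therefore $Q_{\mathrm{rig}}(-1)=-s(2k)$.

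The main obstacle I anticipate is justifying the regrouping and the uniqueness carefully. For the regrouping, each pair $(q,H)$ must be counted exactly once under $L=\langle H,q\rangle$, with $H\leq L\cap K_{0}$ automatic. For uniqueness, I must check that truncating the cumulative sums to the up-set $\mathcal{E}$ still yields a unitriangular system. The key structural input I will lean on is that $L\cap K_{0}$ has index $2$ in any $L\not\leq K_{0}$, together with Lemma~\ref{L:CaracKernel}; together they identify the order-$2$ subgroups outside $K_{0}$ with the quasipolarities.
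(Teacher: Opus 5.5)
Your proposal is correct and follows the paper's proof essentially step for step. You regroup \eqref{E:BlackSide} by $L=\langle H,q\rangle$, show that both $C$ and $-\mu(1,\cdot)$ have cumulative sums over $\{J\le L: J\not\le K_0\}$ equal to $[L\cap K_0=1]$, identify the order-$2$ case with a single quasipolarity, and invert on the restricted poset. Your explicit index-$2$ justification for $|L|=2$ (left implicit in the paper) and your use of Lemma~\ref{L:CaracKernel} to get the derangement property are harmless variants; the up-set property of $\mathcal{E}$ is not actually needed, since the restricted zeta matrix is unitriangular on any finite subposet.
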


\begin{proof}
Consider first
the set 
\[
P=\{L\leq G:L\not\leq K_{0}\}.
\]

Then
\begin{multline*}
\sum_{q\in\mathcal{Q}(G)}\sum_{H\leq K_{0}}\mu(1,H)2^{|S/\langle H,q\rangle|}
=\\
\sum_{L\in P}\left(\sum_{q\in\mathcal{Q}(G)}\sum_{H\leq K_{0}}[L=\langle H,q\rangle]\mu(1,H)\right)2^{|S/L|}
\end{multline*}
so we can define
\[
C(L) = \sum_{q\in\mathcal{Q}(G)}\sum_{H\leq K_{0}}[L=\langle H,q\rangle]\mu(1,H).
\]

We observe that
\begin{align*}
\sum_{J\leq L,J\not\leq K_{0}} C(J) &= \sum_{J\leq L,J\not\leq K_{0}}\sum_{q\in\mathcal{Q}(G)}\sum_{H\leq K_{0}}[J=\langle H,q\rangle]\mu(1,H)\\
&=\sum_{q\in\mathcal{Q}(G)}\sum_{H\leq K_{0}}\mu(1,H)\sum_{J\leq L,J\not\leq K_{0}}[J=\langle H,q\rangle]\\
&=\sum_{q\in\mathcal{Q}(G)}\sum_{H\leq K_{0}}\mu(1,H)[\langle H,q\rangle \leq L]\\
&=\sum_{q\in\mathcal{Q}(G)}\sum_{H\leq K_{0}}[q\in L][H\leq L\cap K_{0}]\mu(1,H)\\
&=\sum_{H\leq K_{0}}\sum_{q\in\mathcal{Q}(G)}[q\in L][H\leq L\cap K_{0}]\mu(1,H)\\
&=|\mathcal{Q}(G)\cap L|\sum_{H\leq K_{0}}[H\leq L\cap K_{0}]\mu(1,H)\\
&=|\mathcal{Q}(G)\cap L|\sum_{H\leq L\cap K_{0}}\mu(1,H)=|\mathcal{Q}(G)\cap L|[L\cap K_{0}=1]
\end{align*}
and we have two cases: if $L\cap K_{0}$ is not trivial, then this cumulative sum is $0$. Otherwise, $L$ is of order $2$,
thus there is only another element $q$ which is its own inverse and it is not in $K_{0}$, which means it has an odd
translational part, and hence it cannot have fixed points. Thus it is a quasipolarity, and $|\mathcal{Q}(G)\cap L|=1$.
In summary,
\[
\sum_{J\leq L,J\not\leq K_{0}} C(J) = [L\cap K_{0}=1].
\]

On another hand,
\begin{align*}
[L=1]=\sum_{J\leq L} \mu(1,J) &= \sum_{J\leq L,J\leq K_{0}}\mu(1,J)+\sum_{J\leq L,J\not\leq K_{0}}\mu(1,J)\\
&= \sum_{J\leq L\cap K_{0}}\mu(1,J)+\sum_{J\leq L,J\not\leq K_{0}}\mu(1,J)\\
&= [L\cap K_{0}=1]+\sum_{J\leq L,J\not\leq K_{0}}\mu(1,J)
\end{align*}
and, whenever $L\not\leq K_{0}$, this implies that $[L=1]=0$ and
\[
-\sum_{J\leq L,J\not\leq K_{0}}\mu(1,J)=[L\cap K_{0}=1] = \sum_{J\leq L,J\not\leq K_{0}} C(J).
\]

Using Möbius inversion (within the restricted poset $P$; note that $\mu$ is the Möbius function of the full subgroup lattice
of $G$), we conclude that
\[
 C(L) = -\mu(1,L).
\]

Substituting this identity in the decomposition of \eqref{E:BlackSide} it follows at once that it equals \eqref{E:WhiteSideRedux}, save for a sign.
\end{proof}

\section{Computations}

A function in GAP 4.11.1 that implements the formula
\[
s(2k)=
-\frac1{|G|}
\sum_{H\not\le K_0}\mu(1,H)2^{|S/H|}
\]
obtained from Theorem 6.1 was coded in order to calculate the values of
$s(2k)$ for $k=27,29,31,33,35,37,39,41,43,45$, extending the ones that appear
in \cite{oA18}. The results appear in Table \ref{T:s2k}. The GAP code in the appendix evaluates this
sum over conjugacy classes of subgroups, multiplying each term by the size of the corresponding
conjugacy class.

\begin{table}[H]
\caption{Computed values of $s(2k)$ for $k=27+2j$, $0\leq j \leq 9$, using the formula following Theorem \ref{Th:Main}, programmed in GAP 4.11.1.}.
\label{T:s2k}
\begin{tabular}{|c|l|} \hline $k$ & $s(2k)$ \\ \hline $27$ & $3864448$ \\ \hline $29$ & $9916395$ \\ \hline $31$ & $36943701$ \\ \hline $33$ & $312102725$ \\ \hline $35$ & $981531823$ \\ \hline $37$ & $1960450765$ \\ \hline $39$ & $16442472485$ \\ \hline $41$ & $28158172173$ \\ \hline $43$ & $107150534181$ \\ \hline $45$ & $977333969800$ \\ \hline \end{tabular}
\end{table}

\section*{Acknowledgments}

The author acknowledges the assistance of OpenAI's ChatGPT 5.5 Pro during the exploratory stage that led to the use of the Möbius identity \eqref{E:IdMobius}, and also to the GAP coding for computations. The author reviewed all outputs and takes full responsibility for the final content.

\section*{Appendix: Source Code}

\begin{verbatim}
AffinePerm := function(n, u, v)
  # Return the permutation of {1,...,n} induced by the
  # affine map e^u.v : x |-> v*x + u on Z/nZ.  The
  # residue x is represented by the point x+1.
    return PermList(List([1..n],
        i -> ((v*(i-1) + u) mod n) + 1));
end;

AffineGroupData := function(n)
    local units, genG, genK0, G, K0;

  # Units of Z/nZ.
    units := Filtered([0..n-1], v -> GcdInt(v,n) = 1);

  # Generators for Aff(Z/nZ): the translation
  # e^1.1 and the multiplications e^0.v, with v
  # a unit.
    genG := Concatenation(
        [AffinePerm(n,1,1)],
        List(units, v -> AffinePerm(n,0,v))
    );

  # Generators for the parity-preserving affine
  # subgroup K0: the even translation e^2.1 and
  # all unit multiplications e^0.v.
    genK0 := Concatenation(
        [AffinePerm(n,2 mod n,1)],
        List(units, v -> AffinePerm(n,0,v))
    );

    G := Group(genG);
    K0 := Group(genK0);

    return rec(G := G, K0 := K0);
end;

BottomMoebiusTom := function(tom)
    local subs, nrs, ords, inds, mu, i, pos, j, s;

  # Compute the bottom Möbius values mu(1,H) in the
  # subgroup lattice, using subgroup-containment
  # data from the table of marks.
  # Here H ranges over representatives of conjugacy
  # classes of subgroups.

    subs := SubsTom(tom);
    nrs  := NrSubsTom(tom);
    ords := OrdersTom(tom);

    inds := [1..Length(ords)];
    SortBy(inds, i -> ords[i]);

    mu := [];

    for i in inds do
        if ords[i] = 1 then
            mu[i] := 1;
        else
            s := 0;

            for pos in [1..Length(subs[i])] do
                j := subs[i][pos];

                if j <> i then
                    s := s + nrs[i][pos] * mu[j];
                fi;
            od;

            mu[i] := -s;
        fi;
    od;

    return mu;
end;

StrongDichotomiesOddByTom := function(k)
    local n, data, G, K0, tom, mu, lengths, total,
          i, H, SH;

    if k mod 2 = 0 then
        Error("This formula is for odd k.");
    fi;

    # Initialisation.
    n := 2*k;
    data := AffineGroupData(n);
    G := data.G;
    K0 := data.K0;

    tom := TableOfMarks(G);
    mu := BottomMoebiusTom(tom);
    lengths := LengthsTom(tom);

    total := 0;

  # Compute
  #
  # s(2k) = -1/|G|sum_{H not <= K0} mu(1,H) 2^{|S/H|}.
  #
  # The sum is taken over conjugacy classes of
  # subgroups.  Therefore each term is multiplied
  # by lengths[i], the size of the conjugacy class
  # of the subgroup representative H.
    for i in [1..Length(lengths)] do
        H := RepresentativeTom(tom, i);

        if not IsSubgroup(K0, H) then
            SH := Length(Orbits(H, [1..n], OnPoints));

            total := total + lengths[i] * mu[i] * 2^SH;
        fi;
    od;

    return - total / Size(G);
end;
\end{verbatim}
\bibliographystyle{amsplain}
\bibliography{moebius}
\end{document}